\newcommand{\T}{\mathcal{T}}
\newcommand{\E}{\mathcal{E}}
\newcommand{\ang}[1]{\langle #1 \rangle_{\partial\T_h}}
\newcommand{\rd}{\partial}
\newcommand{\wh}{\widehat}
\newcommand{\R}{\mathbb{R}}
\newcommand{\Id}{I}
\newcommand{\qflux}{\widehat{\bm q}_h}
\newcommand{\PV}{\bm{P}_{\bm V}}
\newcommand{\Pik}{\bm \Pi_k}
\newcommand{\PW}{P_W}
\newcommand{\PM}{P_M}
\newcommand{\Eq}{\bm e_{\bm q}}
\newcommand{\Eu}{e_u}
\newcommand{\Et}{e_{\wh u}}
\newcommand{\dVq}{\bm \delta_{\bm V}\bm q}
\newcommand{\dWu}{\delta_W u}
\newcommand{\dMu}{\delta_M u}
\newcommand{\dVth}{\bm \delta_{\bm V}\bm\theta}
\newcommand{\dWpsi}{\delta_W\psi}
\newcommand{\dMpsi}{\delta_M\psi}
\begin{document}
\title{An HDG method with orthogonal projections in facet integrals
\thanks{This work was supported by JSPS KAKENHI Numbers 15H03635, 15K13454, 17K14243 and 17K18738.}
}
\author{Issei Oikawa}
\institute{Issei Oikawa
\at
Waseda Research Institute for Science and Engineering, Waseda University,
 3-4-1 Okubo, Shinjuku-ku, Tokyo 169-8555, Japan\\
\email{oikawa@aoni.waseda.jp}}

\date{Received: date / Accepted: date}
\maketitle
\begin{abstract}
We propose and analyze a new hybridizable discontinuous Galerkin (HDG) method for second-order elliptic problems.
Our method is obtained by inserting the $L^2$-orthogonal
projection onto the approximate space for a numerical trace
into all facet integrals
 in the usual HDG formulation.
The orders of convergence for all variables are  optimal
  if we use polynomials of degree $k+l$, $k+1$ and $k$, where
  $k$ and $l$ are any non-negative integers,  to approximate the vector, scalar and trace variables, which implies that our method can achieve superconvergence
for the scalar variable without postprocessing.
Numerical results are  presented to verify the theoretical results.
\keywords{Discontinuous Galerkin \and Hybridization  \and Superconvergence}
\subclass{65N12 \and 65N30}
\end{abstract}

\section{Introduction}
In this paper, we propose a new hybridizable discontinuous Galerkin (HDG) method for second-order elliptic problems. For simplicity, the following diffusion problem is considered:
\begin{subequations} \label{eq:pois}
\begin{align}
  \bm q + \nabla u &= 0  \quad \text{ in } \Omega, \\
  \nabla \cdot \bm q  &= f  \quad \text{ in } \Omega, \\
  u &= 0 \quad \text{ on } \rd\Omega,
\end{align}
\end{subequations}
where $\Omega \subset \R^d~(d=2,3)$ is a bounded and convex polygonal or polyhedral domain and $f$ is a given $L^2$-function.

To begin with, let us define notations for the description of the standard HDG method.
Let $\T_h$ be a mesh of $\Omega$, which consists of polygons or polyhedrons, where $h$ stands for the mesh size. Let
  $\E_h$ denote the set of faces of all elements in $\T_h$.
A family of meshes $\{\T_h\}_h$ is assumed to satisfy the chunkiness condition \cite{BrSc2008},
under which the trace and inverse inequalities hold.
We use the usual notation of the Sobolev spaces~\cite{AdFo2003}, such as $H^m(D)$,
$\|w\|_{m,D} := \|w\|_{H^m(D)}$, $|w|_{m,D} := |w|_{H^m(D)}$ for an integer $m$
and a domain $D \subset \R^d$.
When $D=\Omega$ or $m=0$, we omit the domain or the index and
simply write
$\|w\|_m = \|w\|_{m,\Omega}$, $|w|_m = |w|_{m,\Omega}$
 and $\|w\| = \|w\|_{0,\Omega}$.
 The piecewise or broken Sobolev space of order $m$ is defined by
  $H^m(\T_h) := \{ v \in L^2(\Omega) : v|_K \in H^m(K)~ \forall K \in \T_h\}$.
We denote by $L^2(\E_h)$ the $L^2$ space on the union
of all faces of $\E_h$
and by $P_k(\T_h)$ the space of piecewise polynomials  of degree $k$.
The piecewise inner products are written as
\[
  (u,v)_{\T_h} = \sum_{K \in \T_h} \int_K uv dx,
  \qquad
  \ang{u,v} = \sum_{K \in \T_h} \int_{\rd K} uv ds.
\]
The induced piecewise norm are denoted as $\|v\|_{\T_h} = (v,v)^{1/2}_{\T_h}$ and $\|v\|_{\rd\T_h} = \ang{v,v}^{1/2}$, and the piecewise Sobolev
 seminorm is defined by $|w|_{1,\T_h} =
 \left(\sum_{K \in \T_h} |w|_{1, K}^2\right)^{1/2}$.

Throughout the paper, we will use the symbol $C$ to
denote generic constants independent of $h$.
Vector variables and function spaces are displayed in boldface, such as $\bm P_k(\T_h) = P_k(\T_h)^d$.

We define finite element spaces for $\bm q$, $u$ and the trace of $u$ by
\begin{align*}
 &  \bm V_h =\{ \bm v\in \bm L^2(\Omega) : \bm v|_K \in \bm V(K)~\forall K \in \T_h\}, \\
& W_h = \{ w \in L^2(\Omega) : w |_K \in P_{k+1} (K)~\forall K \in \T_h\}, \\
& M_h = \{ \mu \in L^2(\E_h) : \mu|_F \in P_k(F) ~\forall F \in \E_h\},
\end{align*}
respectively, where $\bm V(K)$ is a finite-dimensional spaces satisfying $P_k(K) \subset \bm V(K)$.
 The $L^2$-orthogonal projections onto $\bm V_h$, $W_h$ and $M_h$
 are denoted by $\PV$, $\PW$ and $\PM$, respectively.
 We simply write  $\PM w = \PM(w|_{\E_h})$ for $w \in H^2(\T_h)$.
 Note that $\PM w$ may not belong to $M_h$
  since it is double-valued in general.

The standard HDG method reads as follows:
Find $(\bm q_h, u_h, \wh u_h)  \in \bm V_h \times W_h \times M_h$
 such that
\begin{subequations} \label{std-hdg}
\begin{align}
	(\bm q_h, \bm v)_{\T_h} -(u_h, \nabla \cdot \bm v)_{\T_h}
	 + \ang{\wh u_h, \bm v \cdot \bm n} &= 0
	 & \forall \bm v \in \bm V_h, \label{std-hdg-a}  \\
	-(\bm q_h, \nabla w)_{\T_h} + \ang{\qflux\cdot \bm n, w}  &= (f, w)
	 & \forall w \in W_h, \label{std-hdg-b}\\
	\ang{\qflux \cdot \bm n, \mu}& = 0
	&\forall \mu \in M_h, \label{std-hdg-c}
\end{align}
\end{subequations}
where $\qflux$ is the numerical flux defined by
\begin{align} \label{qflux}
  \qflux \cdot \bm n= \bm q_h \cdot \bm n
	 + \tau (u_h - \wh u_h).
\end{align}
Here, $\tau$ is a positive parameter and
 is set to be of order $O(h^{-1})$ in the paper.

The so-called Lehrenfeld-Sch\"oberl (LS) numerical flux \cite{Lehrenfeld2010} is obtained by inserting $\PM$ into the stabilization part of the numerical flux:
\begin{align*} 
  \qflux^{LS} \cdot \bm n= \bm q_h \cdot \bm n
	 + \tau (\PM u_h - \wh u_h).
\end{align*}
In \cite{Oikawa2015}, it was proved that the HDG method using the LS numerical flux (the HDG-LS method)
achieves optimal-order convergence for all variables
 if we use polynomials of degree $k$, $k+1$ and $k$ for $\bm V_h$, $W_h$ and $M_h$, respectively.
 It can be said that the HDG-LS method is superconvergent for the scalar variable $u$ without postprocessing.
  Another more elaborate flux was introduced in the hybrid higher-order (HHO) method \cite{PEL2014,PiAl2015}, which
 was recently linked to the HDG method in \cite{CPE2016}.
 The LS numerical flux approach has been applied to various problems;
 linear elasticity~\cite{QJS2018},
 convection-diffusion problems~\cite{QiSh2016}, Stokes equations~\cite{Oikawa2016}, Navier-Stokes equations~\cite{QiSh2016ns,LeSc2016} and Maxwell's equations~\cite{CQSS2017}.

 Let us here point out that the superconvergence of the HDG-LS method is sensitive to the choice of $\bm V_h$.
For example, the superconvergence property is no longer maintained
 if  $\bm V_h$ is taken to be $\bm P_{k+1}(\T_h)$ instead of $\bm P_{k}(\T_h)$.
 We now demonstrate that by numerical experiments for
 the test problem \eqref{testprob} which will be provided in Section 4.
The numerical results
are shown in Table \ref{tbl1}.
In the case of $\bm V_h = \bm P_1(\T_h)$,
the orders of convergence
are optimal for both variables $\bm q$ and $u$.
On the other hand,
all the orders become sub-optimal for $\bm V_h = \bm P_2(\T_h)$.
 \begin{table}[h]
 \caption{Convergence history of the HDG-LS method for $\bm V_h \times W_h \times M_h = \bm P_l(\T_h) \times P_2(\T_h) \times P_1(\E_h)$}
 \label{tbl1}
 \centering
 \begin{tabular}{|c|c|cc|cc|cc|} \hline
  & & \multicolumn{2}{c|}{$\|\bm q - \bm q_h\|$} &
  \multicolumn{2}{c|}{$\|u-u_h\|$} &
  \multicolumn{2}{c|}{$\|h^{-1/2}(\PM u_h - \wh u_h)\|_{\rd \T_h}$}   \\
$l$ & $1/h$  & Error & Order & Error & Order & Error & Order  \\
\hline
 & 10 & 1.236E-02 & -- & 7.400E-04 & -- & 2.719E-02 & -- \\
1 & 20 & 3.083E-03 & 2.00 & 9.085E-05 & 3.03 & 6.676E-03 & 2.03 \\
 & 40 & 7.655E-04 & 2.01 & 1.140E-05 & 2.99 & 1.662E-03 & 2.01 \\
 & 80 & 1.915E-04 & 2.00 & 1.414E-06 & 3.01 & 4.113E-04 & 2.01 \\
 \hline
 & 10 & 1.464E-01 & -- & 2.738E-03 & -- & 1.064E-02 & -- \\
2 & 20 & 7.177E-02 & 1.03 & 6.517E-04 & 2.07 & 4.999E-03 & 1.09 \\
 & 40 & 3.543E-02 & 1.02 & 1.568E-04 & 2.06 & 2.363E-03 & 1.08 \\
 & 80 & 1.744E-02 & 1.02 & 3.815E-05 & 2.04 & 1.180E-03 & 1.00 \\ \hline
\end{tabular}
\end{table}
\par
 The aim of the paper is to recover
  the superconvergence property for such cases.
  The key idea is to insert the orthogonal projection $\PM$ into the facet integrals in the usual HDG formulation.
  The resulting method
  can achieve optimal convergence in $\bm q$ and
  superconvergence in $u$ without postprocessing
  if $\bm V_h$ contains $\bm P_k(\T_h)$, see Theorems \ref{thm1} and
  \ref{thm2}.

 The rest of the paper is organized as follows.
 In Section 2, we introduce a new HDG method.
 In Section 3,  error estimates for both variables $u$ and $\bm q$ are provided.
 Numerical results are presented to verify our theoretical results in Section 4.
\section{An HDG method with orthogonal projections}
We begin by introducing our method:
 Find $(\bm q_h, u_h, \wh u_h) \in \bm V_h \times W_h\times M_h$
 such that
\begin{subequations} \label{newhdg}
\begin{align}
(\bm q_h + \nabla u_h, \bm v)_{\T_h}
 - \ang{\PM u_h - \wh u_h, \bm v \cdot \bm n} &= 0,
 & \forall\bm v\in \bm V_h, \label{newhdg-a}\\
 -(\bm q_h, \nabla w)_{\T_h} + \ang{\qflux \cdot \bm n, \PM w}  &= (f, w),
 & \forall w \in W_h,
 \label{newhdg-b}
 \\
 \ang{\qflux \cdot \bm n, \mu}& = 0,
 & \forall \mu \in M_h, \label{newhdg-c}
\end{align}
\end{subequations}
where $\qflux$ is the standard numerical flux defined by \eqref{qflux}.
The derivation of our method is simple.
Integrating by parts in \eqref{std-hdg-a} and
replacing $u_h$ by $\PM u_h$, we get \eqref{newhdg-a}.
The second equation \eqref{newhdg-a} is obtained by
 replacing $w$ by $\PM w$ in \eqref{std-hdg-b}.
The third equation \eqref{newhdg-c} is just the same as \eqref{std-hdg-c}.
Since $\mu = \PM \mu $ in $\eqref{newhdg-c}$ (and $\eqref{std-hdg-c}$),
 we can also consider that our method is obtained by inserting the orthogonal projection $\PM$ in all facet integrals in
 the standard HDG method.
\begin{remark}
If $\bm v \cdot \bm n|_F \in P_k(F)$ for any  $F \in \E_h$, then our method is identical to the HDG-LS method since
\begin{align*}
 &\ang{\PM u_h, \bm v \cdot \bm n} = \ang{u_h, \bm v   \cdot \bm n}
&  \text{ in } \eqref{newhdg-a}, \\
 &\ang{\qflux   \cdot \bm n, \PM w} =
  \ang{\PM(\qflux\cdot\bm n), w} = \ang{\qflux^{LS} \cdot \bm n, w}
 & \text{ in } \eqref{newhdg-b}, \\
&\ang{\qflux \cdot \bm n, \mu} = \ang{\PM(\qflux \cdot \bm n), \mu}
=\ang{\qflux^{LS} \cdot \bm n, \mu}
  &\text{ in } \eqref{newhdg-c}.
\end{align*}
\end{remark}
\section{Error analysis}
In this section, we provide
 the optimal-order error estimates of our method.
We are going to use the following approximation properties:
\begin{align*}
\| \bm v - \PV \bm v\|
  &\le C h^{j} |\bm v|_{j},
  & 1 \le j \le k+1,\\
\| \bm v - \PV \bm v\|_{\rd\T_h}
  &\le C h^{j-1/2} |\bm v|_{j},
  &1 \le j \le k+1,\\
\| \nabla (w - \PW w) \|_{\T_h}
  &\le C h^{j-1} |w|_{j},
  & 1 \le j \le k+2,\\
\| w - \PW w \| & \le C h^{j} |w|_{j},
  &  1 \le j \le k+2,\\
\| w - \PW w \|_{\rd\T_h}  &\le C h^{j-1/2} |w|_{j},
  & 1 \le j \le k+2,\\
\| w - \PM w \|_{\rd\T_h}
 &\le C h^{j-1/2} |w|_{j},
 & 1 \le j \le k+1,
\end{align*}
for any $\bm v \in \bm H^j(\Omega)$ and $w \in H^j(\Omega)$.
For the piecewise Sobolev spaces, the following hold:
\begin{align}
 \| w - \PM w\|_{\rd\T_h}
 &\le Ch^{1/2}|w|_{1,\T_h}
 &&\forall w \in H^1(\T_h),
 \label{ineq-PMw}\\
 \| \bm v \cdot \bm n- \PM (\bm v \cdot \bm n)\|_{\rd\T_h}
 &\le Ch^{1/2}|\bm v|_{1,\T_h}
 && \forall \bm v \in \bm H^1(\T_h).
  \label{ineq-PMvn}
\end{align}
Let $\Pik$ be the orthogonal projection from  $\bm H^1(\T_h)$
onto $\bm P_k(\T_h)$, which satisfies
\begin{align}
 &\Pik \bm v \cdot \bm n |_{\rd K} \subset P_k(\rd K) \quad \forall K \in \T_h,
  \label{A1}  \\
 &\| \bm v \cdot \bm n- \Pik \bm v\cdot \bm n\|_{\rd\T_h} \le Ch^{j-1/2}|\bm v|_{j}
 \quad \text{ for } \bm v \in \bm H^{j}(\Omega), 1\le j \le k+1.
\label{A2}
\end{align}
The insertion of $\PM$ in \eqref{newhdg-b}
 gives rise to some terms in the form
\[
 R(\bm v, w) := \ang{(\Id - \PM)\bm v\cdot\bm n, w}
\]
in error analysis.
We show the bound of $R(\cdot,\cdot)$
 by the properties \eqref{A1} and \eqref{A2}.
\begin{lemma} \label{lem:pik}
For all $\bm v \in \bm H^{k+1}(\Omega)$ and $w \in H^1(\T_h)$, we have
\[
  |R(\bm v, w)| \le Ch^{k+1}|\bm v|_{k+1} |w|_{1,\T_h}.
\]
\end{lemma}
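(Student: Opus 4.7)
The plan is to exploit two orthogonality properties of $\PM$ combined with the auxiliary projection $\Pik$: on one side of the pairing, subtract $\Pik \bm v$ from $\bm v$ so that the approximation estimate \eqref{A2} becomes available; on the other side, subtract $\PM w$ from $w$ so that the bound \eqref{ineq-PMw} kicks in. Each subtraction costs $h^{1/2}$ in the boundary norm beyond what $\bm v$ alone would give, and the product $h^{k+1/2}\cdot h^{1/2} = h^{k+1}$ delivers the claimed order.

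Concretely, I would first observe that by \eqref{A1}, the trace $\Pik \bm v \cdot \bm n|_{\rd K}$ lies in $P_k(\rd K)$, hence is fixed by $\PM$; consequently $(I-\PM)(\Pik \bm v \cdot \bm n) = 0$ on $\rd \T_h$, and we may write
\[
 R(\bm v, w) = \ang{(I-\PM)(\bm v - \Pik \bm v)\cdot \bm n,\, w}.
\]
Next, since $\PM$ is the $L^2$-orthogonal projection onto piecewise $P_k$ on $\rd\T_h$, the range of $\PM$ is orthogonal to the range of $I-\PM$, so $\ang{(I-\PM)g,\PM w}=0$ for any admissible $g$. This lets me replace $w$ by $w - \PM w$:
\[
 R(\bm v, w) = \ang{(I-\PM)(\bm v - \Pik \bm v)\cdot \bm n,\, w - \PM w}.
\]
Then Cauchy–Schwarz, together with the $L^2$-contractivity $\|(I-\PM)g\|_{\rd\T_h}\le \|g\|_{\rd\T_h}$, reduces the estimate to bounding $\|(\bm v - \Pik \bm v)\cdot \bm n\|_{\rd\T_h}$ and $\|w-\PM w\|_{\rd\T_h}$. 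The first factor is controlled by \eqref{A2} with $j=k+1$, yielding $Ch^{k+1/2}|\bm v|_{k+1}$; the second by \eqref{ineq-PMw}, yielding $Ch^{1/2}|w|_{1,\T_h}$. Multiplying the two gives the desired $Ch^{k+1}|\bm v|_{k+1}|w|_{1,\T_h}$.

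There is no real obstacle here — the only subtle point is recognizing that one must split the asymmetry of the integrand: use $\Pik$ on the vector side (to kill the non-polynomial part of $\bm v\cdot\bm n$ with the aid of property \eqref{A1}) and use $\PM$ orthogonality on the scalar side (to absorb the $h^{1/2}$ loss that \eqref{ineq-PMw} would cost on a generic $w \in H^1(\T_h)$). Once this twofold use of orthogonality is in place, the rest is a one-line application of Cauchy–Schwarz.
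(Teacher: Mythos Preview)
Your proposal is correct and follows essentially the same argument as the paper: subtract $\Pik\bm v$ on the vector side (using \eqref{A1}), replace $w$ by $(\Id-\PM)w$ on the scalar side by orthogonality, then apply Cauchy--Schwarz together with \eqref{A2} and \eqref{ineq-PMw}. The paper simply compresses these steps into a single displayed chain, whereas you have spelled out each orthogonality observation separately.
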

\begin{proof}
By \eqref{A1}, \eqref{A2} and \eqref{ineq-PMw}, we have
\begin{align*}
  |R(\bm v, w)|
  &=|\ang{(\Id - \PM)(\bm v - \Pik \bm v) \cdot \bm n, (\Id - \PM)w}|\\
  &\le \|\bm v\cdot \bm n - \Pik \bm v\cdot \bm n\|_{\rd\T_h}\|(\Id - \PM)w\|_{\rd\T_h} \\
 &\le Ch^{k+1}|\bm q|_{k+1} |w|_{1,\T_h}.
\end{align*}
This completes the proof. \qed
\end{proof}
\subsection{Error equations}
As a lemma, we show the error equations in terms of the projections of the errors:
\[
 \Eq =  \PV\bm q - \bm q_h, \quad
 \Eu = \PW u - u_h, \quad
 \Et = \PM u - \wh u_h.
\]
The approximation errors are denoted as
\[
  \dVq = \bm q - \PV\bm q, \quad
  \dWu = u - \PW u, \quad
 \dMu = u - \PM u.
\]
\begin{lemma} \label{lem:erreq}
 The following equations hold:
\begin{subequations} \label{errequs}
\begin{align}
	(\Eq, \bm v)_{\T_h}
	+(\nabla \Eu, \bm v)_{\T_h}
	-\ang{\PM \Eu - \Et, \bm v \cdot \bm n}
	&= F_1(\bm v)
	& \forall\bm v\in \bm V_h, \label{err-a}\\
	-(\Eq, \nabla w)
  + \ang{\wh\Eq \cdot \bm n, \PM w}  &= F_2(w)
	& \forall w \in W_h,
	\label{err-b}
	\\
	\ang{\wh\Eq \cdot \bm n, \mu}& = F_3(\mu)
	& \forall \mu \in M_h, \label{err-c}
\end{align}
\end{subequations}
where $\wh\Eq \cdot \bm n
= \Eq \cdot\bm n + \tau(\PM \Eu - \Et)$ and
\begin{align*}
& F_1(\bm v)
  = -(\nabla \dWu, \bm v)_{\T_h}
    + \ang{\PM \dWu, \bm v \cdot \bm n},\\
&F_2(w)
  = -R(\bm q, w)
    -\ang{\dVq \cdot \bm n - \tau  \dWu, \PM w},\\
&F_3(\mu)
 = -\ang{\dVq \cdot \bm n - \tau \dWu, \mu}.
\end{align*}
\end{lemma}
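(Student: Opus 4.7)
The plan is to prove each of the three error equations by (i) substituting the decompositions $\bm q_h = \PV\bm q - \Eq$, $u_h = \PW u - \Eu$, $\wh u_h = \PM u - \Et$ into the method \eqref{newhdg-a}--\eqref{newhdg-c}, (ii) invoking the PDE $\bm q + \nabla u = 0$ and $\nabla\cdot\bm q = f$ to eliminate the exact-solution contributions, and (iii) using the defining orthogonalities of $\PV$, $\PW$, $\PM$ together with the adjoint property $\ang{\phi, \PM w} = \ang{\PM\phi, \PM w}$ (valid element-by-element because $\PM w|_{\partial K}$ lies in $M_h|_{\partial K}$) to isolate the projection errors.

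For \eqref{err-a}, I would substitute and collect terms, which leaves $(\PV\bm q, \bm v)_{\T_h} + (\nabla \PW u, \bm v)_{\T_h} + \ang{\PM\dWu, \bm v \cdot \bm n}$ on the right, after using $\PM(\PW u - u) = -\PM\dWu$. The identity $(\PV\bm q,\bm v) = (\bm q,\bm v)$ (from $(\dVq,\bm v)_{\T_h}=0$ for $\bm v\in\bm V_h$) combined with $(\bm q+\nabla u,\bm v)=0$ reduces the volume terms to $-(\nabla\dWu, \bm v)_{\T_h}$, yielding $F_1$.

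For \eqref{err-b}, the key step will be to handle the mismatch between the elementwise integration-by-parts identity $-(\bm q, \nabla w)_{\T_h} = (f,w) - \ang{\bm q\cdot\bm n, w}$ and the facet term $\ang{\qflux\cdot\bm n, \PM w}$ of the method. I would use $\nabla w \in \bm P_k(\T_h)\subset \bm V_h$ to replace $(\PV\bm q,\nabla w)_{\T_h}$ by $(\bm q,\nabla w)_{\T_h}$, then split $\ang{\bm q\cdot\bm n, w} = \ang{\bm q\cdot\bm n, \PM w} + \ang{\bm q\cdot\bm n, (\Id-\PM)w}$ and observe that $(\Id-\PM)w|_{\partial K}\perp M_h|_{\partial K}$ allows the insertion of $(\Id-\PM)$ on the first factor of the second bracket, producing precisely $R(\bm q, w)$. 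The remaining facet contribution $\ang{\PV\bm q\cdot\bm n + \tau(\PW u - \PM u), \PM w}$ simplifies via $\PW u - \PM u = \dMu - \dWu$ together with $\ang{\dMu, \PM w} = 0$, isolating the $\dVq\cdot\bm n$ and $\tau\dWu$ contributions of $F_2$.

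For \eqref{err-c}, the reasoning is the cleanest: $\ang{\bm q\cdot\bm n,\mu}=0$ for $\mu\in M_h$ (since $\bm q\cdot\bm n$ is single-valued across interior faces and $\mu$ vanishes on $\partial\Omega$), so applying the adjoint property and $\ang{\dMu,\mu}=0$ to the stabilization part isolates $\ang{\wh\Eq\cdot\bm n,\mu}$ and leaves $F_3$. The main obstacle throughout is bookkeeping around the double-valued nature of $\PM w$ on interior faces and consistently exploiting the element-by-element adjoint property; once that is in place the algebra unfolds routinely, and the only genuinely nontrivial manipulation is the extraction of $R(\bm q,w)$ from the integration-by-parts remainder in \eqref{err-b}.
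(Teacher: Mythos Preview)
Your proposal is correct and follows essentially the same approach as the paper. The paper first writes the exact solution's consistency relations, rewrites every term in terms of the projections $\PV\bm q,\PW u,\PM u$ (obtaining an intermediate system \eqref{proju-eq} satisfied by the projections, with $F_1,F_2,F_3$ on the right), and then subtracts the discrete scheme \eqref{newhdg}; you instead substitute $\bm q_h=\PV\bm q-\Eq$, $u_h=\PW u-\Eu$, $\wh u_h=\PM u-\Et$ directly into \eqref{newhdg} and invoke the PDE afterwards. These are the same computation carried out in opposite directions, and the key manipulations you single out---$(\PV\bm q,\nabla w)_{\T_h}=(\bm q,\nabla w)_{\T_h}$ via $\nabla w\in\bm P_k(\T_h)\subset\bm V_h$, the splitting $\ang{\bm q\cdot\bm n,w}=\ang{\bm q\cdot\bm n,\PM w}+R(\bm q,w)$, the elementwise adjoint property of $\PM$, and $\PW u-\PM u=\dMu-\dWu$ with $\ang{\dMu,\PM w}=0$---are exactly the ones the paper uses.
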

\begin{proof}
We easily see that the exact solution satisfies
\begin{subequations} \label{cons}
\begin{align}
  (\bm q, \bm v)_{\T_h}
  +(\nabla u, \bm v)_{\T_h}
  &= 0
  &\forall \bm v \in \bm V_h,  \label{cons-a}\\
 -(\bm q, \nabla w)_{\T_h}
 +\ang{\bm q \cdot \bm n,  w} &= (f,w)
 & \forall w \in W_h, \label{cons-b} \\
 \ang{\bm q \cdot \bm n, \mu} &= 0
 & \forall \mu \in M_h.  \label{cons-c}
\end{align}
\end{subequations}
Each term in  \eqref{cons} is rewritten
in terms of $\PV q, \PW u$ and $\PM u$ as follows:
\begin{align*}
 (\bm q, \bm v)_{\T_h} &= (\PV\bm q, \bm v)_{\T_h},\\
 (\nabla u, \bm v)_{\T_h}
 &= (\nabla \PW u, \bm v)_{\T_h}
+ (\nabla \dWu, \bm v)_{\T_h}, \\
 (\bm q, \nabla w)_{\T_h} &= (\PV\bm q, \nabla w)_{\T_h}, \\
\ang{\bm q \cdot \bm n,  w}
&=\ang{\bm q \cdot \bm n,  \PM w} + R(\bm q, w) \\
&= \ang{\PV \bm q \cdot \bm n,  \PM w}
+\ang{\dVq \cdot \bm n,  \PM w} + R(\bm q, w), \\
\ang{\bm q \cdot \bm n, \mu}
 &= \ang{\PV \bm q \cdot \bm n,  \mu}
 + \ang{\dVq \cdot \bm n, \mu}.
\end{align*}
%
Taking the stabilization terms into account, we have
\begin{subequations}
\label{proju-eq}
\begin{align}
  (\PV\bm q+\nabla \PW u, \bm v)_{\T_h}
  - \ang{\PM(\PW u) - \PM u, \bm v \cdot \bm n}
  &= F_1(\bm v)
  &\forall \bm v \in \bm V_h, \\
 -(\PV\bm q, \nabla w)_{\T_h}
 +\ang{\wh{\PV\bm q} \cdot \bm n , \PM w}
  &=
  (f,w) + F_2(w)
 & \forall w \in W_h, \\
 \ang{\wh{\PV\bm q} \cdot \bm n , \mu} &= F_3(\mu)
 & \forall \mu \in M_h,
\end{align}
\end{subequations}
where $\wh{\PV\bm q}\cdot \bm n := \PV \bm q \cdot \bm n + \tau(\PM u - \PW u)$.
Subtracting \eqref{newhdg} from \eqref{proju-eq}, we obtain the required equations. \qed
\end{proof}
From Lemma \ref{lem:erreq}, the below inequalities follow.
\begin{lemma} \label{lem:Eq-GradEu}
If $u \in H^{k+2}(\Omega)$, then we have
\begin{equation} \label{Eq-GradEu-a}
\|\nabla \Eu\|_{\T_h} \le
C\left(
 \|\Eq\| + h^{-1/2}\|\PM\Eu - \Et\|_{\rd\T_h} + h^{k+1}|u|_{k+2}
\right)
\end{equation}
and
\begin{equation} \label{Eq-GradEu-b}
\|\Eq\| \le C\left(
 \|\nabla\Eu\|_{\T_h} + h^{-1/2}\|\PM\Eu - \Et\|_{\rd\T_h}
 +h^{k+1}|u|_{k+2}
\right).
\end{equation}
\end{lemma}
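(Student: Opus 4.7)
The plan is to derive both inequalities from the single error equation \eqref{err-a} by testing it against two different choices of $\bm v \in \bm V_h$: the first inequality uses $\bm v = \nabla \Eu$, and the second uses $\bm v = \Eq$. The crucial observation that permits the first choice is that since $\Eu \in W_h = P_{k+1}(\T_h)$, we have $\nabla\Eu \in \bm P_k(\T_h) \subset \bm V_h$ (because $P_k(K) \subset \bm V(K)$ by assumption), so $\nabla\Eu$ is indeed a legal test function. Neither inequality requires the other two error equations \eqref{err-b}--\eqref{err-c}; only the momentum-like relation \eqref{err-a} is used.

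For \eqref{Eq-GradEu-a}, taking $\bm v = \nabla\Eu$ in \eqref{err-a} and rearranging gives
\[
 \|\nabla\Eu\|_{\T_h}^2
 = -(\Eq,\nabla\Eu)_{\T_h}
 + \ang{\PM\Eu - \Et, \nabla\Eu\cdot\bm n}
 + F_1(\nabla\Eu).
\]
The first term is handled by Cauchy--Schwarz. For the boundary term, the discrete trace (inverse) inequality $\|\nabla\Eu\cdot\bm n\|_{\rd\T_h} \le Ch^{-1/2}\|\nabla\Eu\|_{\T_h}$, valid since $\nabla\Eu$ is piecewise polynomial, yields the factor $h^{-1/2}\|\PM\Eu-\Et\|_{\rd\T_h}$. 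To bound $F_1(\nabla\Eu) = -(\nabla\dWu,\nabla\Eu)_{\T_h} + \ang{\PM\dWu,\nabla\Eu\cdot\bm n}$, I would use the approximation estimate $\|\nabla\dWu\|_{\T_h}\le Ch^{k+1}|u|_{k+2}$ for the volume part, and the $L^2$-boundedness of $\PM$ on each face together with $\|\dWu\|_{\rd\T_h}\le Ch^{k+3/2}|u|_{k+2}$ and the same inverse trace inequality for the facet part; the two contributions combine to $Ch^{k+1}|u|_{k+2}\|\nabla\Eu\|_{\T_h}$. Dividing through by $\|\nabla\Eu\|_{\T_h}$ gives the claim.

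The inequality \eqref{Eq-GradEu-b} follows by exactly the same bookkeeping after choosing $\bm v = \Eq \in \bm V_h$ in \eqref{err-a}. The inverse trace inequality is now applied to $\Eq$ itself to control $\|\Eq\cdot\bm n\|_{\rd\T_h}\le Ch^{-1/2}\|\Eq\|$, and the bound on $F_1(\Eq)$ is obtained by the identical estimates used above.

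The only real subtleties are (i) checking that $\nabla\Eu$ lies in the test space, which is precisely where the hypothesis $\bm P_k(\T_h)\subset \bm V_h$ enters, and (ii) handling the facet contribution $\ang{\PM\dWu,\bm v\cdot\bm n}$; rather than trying to push $\PM$ onto $\bm v\cdot\bm n$ (which has no special orthogonality structure here), I simply use $\|\PM\dWu\|_{\rd\T_h}\le\|\dWu\|_{\rd\T_h}$ and combine with the inverse trace inequality, absorbing the resulting $h^{k+3/2}\cdot h^{-1/2}=h^{k+1}$ factor. The projection term $R(\bm q,w)$ analyzed in Lemma \ref{lem:pik} is not needed at this stage since it only appears in the error equations \eqref{err-b}--\eqref{err-c}.
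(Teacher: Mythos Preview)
Your argument is correct and follows essentially the same route as the paper: test \eqref{err-a} with $\bm v=\nabla\Eu$ for \eqref{Eq-GradEu-a} and with $\bm v=\Eq$ for \eqref{Eq-GradEu-b}, then bound the three resulting terms by Cauchy--Schwarz, the inverse trace inequality, and the approximation estimates for $\dWu$. Your explicit remark that $\nabla\Eu\in\bm P_k(\T_h)\subset\bm V_h$ is a useful clarification the paper leaves implicit.
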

\begin{proof}
Taking $\bm v = \nabla \Eu$ in \eqref{err-a},
we have
\begin{align*}
\|\nabla \Eu\|^2_{\T_h}
 &= -(\Eq, \nabla \Eu)_{\T_h}
 	 +\ang{\PM\Eu-\Et, \nabla\Eu\cdot\bm n}
 + F_1( \nabla \Eu).
\end{align*}
The first two terms on the right-hand side
are bounded as
\begin{align*}
&|(\Eq, \nabla \Eu)_{\T_h}|
 \le \|\Eq\|\|\nabla \Eu\|_{\T_h},
 \\
&|\ang{\PM \Eu-\Et, \nabla \Eu \cdot \bm n}|
\le Ch^{-1/2}\|\PM \Eu - \Et \|_{\rd\T_h} \|\nabla \Eu\|_{\T_h}.
\end{align*}
By the inverse inequality, we can estimate the remaining term as
\begin{equation} \label{est-f1}
\begin{aligned}
|F_1(\nabla \Eu)|
 &\le |(\nabla \dWu, \nabla \Eu)_{\T_h}|
 +|\ang{\PM\dWu, \nabla\Eu\cdot\bm n}|
  \\
 &\le Ch^{k+1}|u|_{k+2}\|\nabla \Eu\|_{\T_h}.
\end{aligned}
\end{equation}
Combining these results yields \eqref{Eq-GradEu-a}.
Similarly, we can prove \eqref{Eq-GradEu-b}. \qed
\end{proof}
\subsection{The estimate of $\Eq$}
We are now ready to show the main results of the paper.
\begin{theorem} \label{thm1}
If $u \in H^{k+2}(\Omega)$, then we have
\begin{align*}
 \|\Eq\|
 + \|\tau^{1/2}(\PM \Eu- \Et)\|_{\rd\T_h} \le C h^{k+1}|u|_{k+2}.
\end{align*}
\end{theorem}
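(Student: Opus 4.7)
The plan is a standard HDG energy argument, with the twist that the quantity controlled by the stabilization is $\PM\Eu-\Et$ rather than $u_h-\wh u_h$. I would test the error equations with $\bm v=\Eq$ in \eqref{err-a}, $w=\Eu$ in \eqref{err-b}, and $\mu=\Et$ in \eqref{err-c}; adding the first two and subtracting the third makes the $(\Eq,\nabla\Eu)_{\T_h}$ contributions cancel and combines the two boundary pairings into $\ang{\wh\Eq\cdot\bm n-\Eq\cdot\bm n,\PM\Eu-\Et}$, which equals $\|\tau^{1/2}(\PM\Eu-\Et)\|_{\rd\T_h}^2$ by the definition of $\wh\Eq\cdot\bm n$. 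This produces the identity
\begin{equation*}
 \|\Eq\|^2+\|\tau^{1/2}(\PM\Eu-\Et)\|_{\rd\T_h}^2
 = F_1(\Eq)+F_2(\Eu)-F_3(\Et).
\end{equation*}

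The right-hand side is then bounded term by term. The key observation is that the sum $F_2(\Eu)-F_3(\Et)$ telescopes into $-R(\bm q,\Eu)-\ang{\dVq\cdot\bm n-\tau\dWu,\PM\Eu-\Et}$, so the potentially problematic $\PM\Eu$ piece is paired against the stabilization quantity $\PM\Eu-\Et$, which we already control on the left-hand side. I would then use Cauchy--Schwarz together with the approximation inequalities for $\dVq$ and $\dWu$ on $\rd\T_h$, together with $\tau=O(h^{-1})$ (so that $\tau^{-1/2}\|\dVq\cdot\bm n\|_{\rd\T_h}$ and $\|\tau^{1/2}\dWu\|_{\rd\T_h}$ are both $O(h^{k+1}|u|_{k+2})$), to dominate this pairing by $C h^{k+1}|u|_{k+2}\,\|\tau^{1/2}(\PM\Eu-\Et)\|_{\rd\T_h}$. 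The term $F_1(\Eq)$ is handled just as in the proof of \eqref{est-f1}: Cauchy--Schwarz for the volume piece and a discrete trace/inverse inequality on $\Eq\cdot\bm n$ for the boundary piece both yield $Ch^{k+1}|u|_{k+2}\|\Eq\|$.

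The only remaining term, $R(\bm q,\Eu)$, is bounded by Lemma~\ref{lem:pik} by $Ch^{k+1}|\bm q|_{k+1}\|\nabla\Eu\|_{\T_h}\le Ch^{k+1}|u|_{k+2}\|\nabla\Eu\|_{\T_h}$. To close the loop, I would invoke \eqref{Eq-GradEu-a} from Lemma~\ref{lem:Eq-GradEu} to replace $\|\nabla\Eu\|_{\T_h}$ by $\|\Eq\|+\|\tau^{1/2}(\PM\Eu-\Et)\|_{\rd\T_h}+h^{k+1}|u|_{k+2}$ (again using $\tau=O(h^{-1})$). Collecting everything gives
\begin{equation*}
 \|\Eq\|^2+\|\tau^{1/2}(\PM\Eu-\Et)\|_{\rd\T_h}^2
 \le Ch^{k+1}|u|_{k+2}\bigl(\|\Eq\|+\|\tau^{1/2}(\PM\Eu-\Et)\|_{\rd\T_h}+h^{k+1}|u|_{k+2}\bigr),
\end{equation*}
and a Young's inequality followed by absorbing the squared terms on the left finishes the proof.

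The only subtle point is the handling of $R(\bm q,\Eu)$: unlike the standard HDG-LS analysis, the extra consistency error introduced by $\PM$ in the volume-derived facet integral forces us to control $\|\nabla\Eu\|_{\T_h}$, and this is precisely why Lemma~\ref{lem:Eq-GradEu} is needed. Everything else is routine.
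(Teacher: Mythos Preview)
Your proposal is correct and matches the paper's own proof essentially line for line: the same test functions $(\Eq,\Eu,\Et)$ yield the identical energy identity, the paper splits $F_2(\Eu)-F_3(\Et)$ into the same two pieces $I_1=-R(\bm q,\Eu)$ and $I_2=-\ang{\dVq\cdot\bm n-\tau\PM\dWu,\PM\Eu-\Et}$, and it then bounds $I_1$ via Lemmas~\ref{lem:pik} and~\ref{lem:Eq-GradEu} and $I_2$ by Cauchy--Schwarz with $\tau=O(h^{-1})$, exactly as you outline. Your remark that the need for Lemma~\ref{lem:Eq-GradEu} stems from the extra consistency term $R(\bm q,\Eu)$ is precisely the point.
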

\begin{proof}
Substituting $w = e_u$ in \eqref{err-b}
and $\mu = e_{\wh u}$ in \eqref{err-c}, we have
\begin{align}
 -(\Eq, \nabla \Eu)_{\T_h}
 + \ang{\wh\Eq \cdot \bm n, \PM \Eu - \Et}
     = F_2(e_u) - F_3(e_{\wh u}).
   \label{err-b-c}
\end{align}
Taking $\bm v = \bm e_{\bm q}$ in \eqref{err-a} and adding it
to \eqref{err-b-c}, we get
\begin{align*}
 \|\bm e_{\bm q}\|^2
 +\|\tau^{1/2}(\PM \Eu - \Et)\|_{\rd\T_h}^2
 = F_1(\bm e_{\bm q}) + F_2(e_u) - F_3(e_{\wh u}).
\end{align*}
In a similar way to \eqref{est-f1}, we have
\begin{align*}
|F_1(\Eq)|
 &\le Ch^{k+1}|u|_{k+2} \|\Eq\|.
\end{align*}
The rest terms are written as
\begin{equation*}
\begin{aligned}
F_2(\Eu) - F_3(\Et)
 &=
 -R(\bm q, \Eu)
 +\ang{\dVq\cdot\bm n-\tau\PM\dWu, \PM\Eu-\Et} \\
 & =: I_1 + I_2.
\end{aligned}
\end{equation*}
By Lemmas \ref{lem:pik} and \ref{lem:Eq-GradEu}, we have
\begin{align*}
|I_1|
  &\le Ch^{k+1}|u|_{k+2} \|\nabla \Eu\|_{\T_h} \\
  &\le  Ch^{k+1} |u|_{k+2}
  \left(
  \|\Eq\|+h^{-1/2}\|\PM\Eu-\Et\|_{\rd\T_h}+h^{k+1}|u|_{k+2}
 \right).
\end{align*}
The term $I_2$ is bounded as, in view of $\tau=O(h^{-1})$,
\begin{align*}
 |I_2|
  &\le (\|\dVq\|_{\rd\T_h} + \tau\|\dWu\|_{\rd\T_h})
   \|\PM \Eu - \Et\|_{\rd\T_h}\\
  &= Ch^{k+1}|u|_{k+2}\cdot\tau^{1/2} \|\PM \Eu - \Et\|_{\rd\T_h}.
\end{align*}
Using Young's inequality and
 arranging the terms, we obtain
\begin{equation*} 
\begin{aligned}
&\|\Eq\|^2
 + \|\tau^{1/2}(\PM \Eu- \Et)\|_{\rd\T_h}^2 \le
 C h^{2(k+1)} |u|_{k+2}^2,
\end{aligned}
\end{equation*}
which completes the proof. \qed
\end{proof}
\subsection{The estimate of $\Eu$}
We show that the order of convergence in the variable $u$ is optimal by the duality argument.
To this end, we consider the adjoint problem of \eqref{eq:pois}:
Find $\psi \in H^2(\Omega)\cap H^1_0(\Omega)$
and $\bm\theta \in \bm H^1(\Omega)$ such that
\begin{align*}
   \nabla \psi + \bm \theta &= 0  & \text{ in } \Omega, \\ 
   \nabla \cdot \bm \theta &= \Eu & \text{ in } \Omega, \\
   \psi &=  0                     & \text{ on } \rd\Omega. 
\end{align*}
As is well known, the  elliptic regularity holds:
\[
   \|\bm\theta\|_1 + \|\psi\|_2 \le C\|\Eu\|.
\]
Let us denote the approximation errors of $\psi$ and $\bm \theta$ as follows:
\[
  \dVth = \bm\theta - \PV\bm\theta, \quad
  \dWpsi = \psi - \PW \psi, \quad
  \dMpsi = \psi - \PM \psi.
\]
\begin{theorem} \label{thm2}
If $u \in H^{k+2}(\Omega)$, then we have
\[
  \| \Eu\| \le Ch^{k+2} |u|_{k+2}.
\]
\end{theorem}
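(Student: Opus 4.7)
The plan is a duality (Aubin--Nitsche) argument based on the adjoint problem already introduced. Starting from $\|\Eu\|^2=(\Eu,\nabla\cdot\bm\theta)$, I will derive an identity that expresses $\|\Eu\|^2$ as a finite sum of products of ``primal'' error quantities (controlled by Theorem~\ref{thm1}) and ``adjoint'' approximation quantities (controlled by the approximation properties of $\bm\theta\in\bm H^1$, $\psi\in H^2$, and the elliptic regularity $\|\bm\theta\|_1+\|\psi\|_2\le C\|\Eu\|$). Bounding each summand by $Ch^{k+2}|u|_{k+2}\|\Eu\|$ and dividing by $\|\Eu\|$ will yield the claim.

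Concretely, I first rewrite the adjoint consistency relations in the same projected form used in the proof of Lemma~\ref{lem:erreq}, producing residual functionals $G_1,G_2,G_3$ analogous to $F_1,F_2,F_3$ but built from $\dVth$, $\dWpsi$, $\dMpsi$, and an $R(\bm\theta,\cdot)$ term in place of $R(\bm q,\cdot)$. I then test the primal error equations~\eqref{errequs} against $(\PV\bm\theta,\PW\psi,\PM\psi)$ and the projected adjoint equations against $(\Eq,\Eu,\Et)$. The common bilinear parts cancel, leaving an identity of the form
\begin{equation*}
 \|\Eu\|^2 \;=\; F_1(\PV\bm\theta)+F_2(\PW\psi)-F_3(\PM\psi)-G_1(\Eq)-G_2(\Eu)+G_3(\Et).
\end{equation*}
All terms not involving $R$ are routine Cauchy--Schwarz products of a factor bounded by Theorem~\ref{thm1} with a factor of order $h$ coming from adjoint approximation errors.

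The main obstacle is the inconsistency term $R(\bm q,\PW\psi)$ inside $F_2(\PW\psi)$. A direct application of Lemma~\ref{lem:pik} bounds it only by $Ch^{k+1}|u|_{k+2}|\PW\psi|_{1,\T_h}$, which is $O(h^{k+1}\|\Eu\|)$ and thus one power of $h$ short. The resolution uses the continuity of $\psi$: writing $R(\bm q,\PW\psi)=\ang{(\Id-\PM)(\bm q-\Pik\bm q)\cdot\bm n,(\Id-\PM)\PW\psi}$ and, on every interior face $F=\rd K_1\cap\rd K_2$, combining the two element contributions via $\bm n_{K_1}=-\bm n_{K_2}$ and continuity of $\bm q$, the second factor reduces to the jump $[\PW\psi]=\PW\psi|_{K_1}-\PW\psi|_{K_2}=[\PW\psi-\psi]$, which satisfies $\|[\PW\psi]\|_F\le Ch^{3/2}|\psi|_{2,K_1\cup K_2}$; on boundary faces $\psi|_{\rd\Omega}=0$ gives $\|\PW\psi\|_F\le Ch^{3/2}|\psi|_2$ directly. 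Combined with $\|(\bm q-\Pik\bm q)\cdot\bm n\|_{\rd\T_h}\le Ch^{k+1/2}|u|_{k+2}$, this yields $|R(\bm q,\PW\psi)|\le Ch^{k+2}|u|_{k+2}|\psi|_2$. The adjoint counterpart $R(\bm\theta,\Eu)$ is bounded by rerunning the proof of Lemma~\ref{lem:pik} at index $0$ (since $\bm\theta$ is only $H^1$), yielding $|R(\bm\theta,\Eu)|\le Ch|\bm\theta|_1|\Eu|_{1,\T_h}$, which is $O(h^{k+2}|u|_{k+2}\|\Eu\|)$ by Lemma~\ref{lem:Eq-GradEu} and Theorem~\ref{thm1}.
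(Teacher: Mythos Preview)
Your overall strategy---deriving projected adjoint equations with residuals $G_i$, cross-testing against $(\Eq,\Eu,\Et)$ and $(\PV\bm\theta,\PW\psi,\PM\psi)$, and cancelling the common bilinear parts---matches the paper's proof exactly, and your treatment of the two $R$-terms is correct. In particular, your face-by-face jump argument for $R(\bm q,\PW\psi)$ is equivalent to the paper's cleaner observation that $R(\bm q,\psi)=0$ (since $\bm q\cdot\bm n$ and $\psi$ are single-valued on interior faces and $\psi|_{\partial\Omega}=0$), which lets one write $R(\bm q,\PW\psi)=-R(\bm q,\dWpsi)$ and then apply Lemma~\ref{lem:pik} directly; both routes gain the missing power of $h$ from $|\dWpsi|_{1,\T_h}\le Ch|\psi|_2$.

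There is, however, a genuine gap. Your claim that ``all terms not involving $R$ are routine Cauchy--Schwarz products of a factor bounded by Theorem~\ref{thm1} with a factor of order $h$ coming from adjoint approximation errors'' is false for $F_1(\PV\bm\theta)$. This term pairs the \emph{primal} approximation error $\dWu$ against $\PV\bm\theta$ itself, not against any quantity controlled by Theorem~\ref{thm1}; a direct Cauchy--Schwarz on either summand of
\[
F_1(\PV\bm\theta)=-(\nabla\dWu,\PV\bm\theta)_{\T_h}+\ang{\PM\dWu,\PV\bm\theta\cdot\bm n}
\]
yields only $O(h^{k+1}\|\bm\theta\|)$, one power of $h$ short. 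The paper recovers the missing factor by integrating by parts against $\bm\theta$ (not $\PV\bm\theta$), using $\nabla\cdot\bm\theta=\Eu$ to turn the volume term into $(\dWu,\Eu)\le Ch^{k+2}|u|_{k+2}\|\Eu\|$, and then handling the residual boundary term $\ang{(\Id-\PM)\dWu,\PV\bm\theta\cdot\bm n}$ by the same $\Pik$-insertion trick you used for $R$ (writing $\PV\bm\theta=\PV\bm\theta-\bm\theta+\bm\theta-\Pik\bm\theta$ on the boundary). This step is essential and should not be dismissed as routine.
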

\begin{proof}
Similarly to \eqref{proju-eq}, we deduce
\begin{subequations}
\begin{align}
  (\PV\bm \theta+ \nabla \PW \psi, \bm v)_{\T_h}
  -
 \ang{\PM (\psi -  \PW \psi), \bm v \cdot \bm n}
  &= G_1(\bm v)
  &\forall \bm v \in \bm V_h, \label{acons-a}\\
 -(\PV\bm \theta, \nabla w)_{\T_h}
 +\ang{\wh{\PV\bm \theta} \cdot \bm n , \PM w}
  &=
  (\Eu,w)+G_2(w)
 & \forall w \in W_h, \label{acons-b}\\
 \ang{\wh{\PV\bm \theta} \cdot \bm n , \mu} &= G_3(\mu)
 & \forall \mu \in M_h,  \label{acons-c}
\end{align}
\end{subequations}
where $\wh{\PV\bm\theta}\cdot\bm n
 = \PV\bm \theta\cdot \bm n + \tau(\PW \psi - \PM \psi)$ and
\begin{align*}
& G_1(\bm v)
  =
  -(\nabla \dWpsi, \bm v)_{\T_h}
  +\ang{\PM \dWpsi, \bm v\cdot\bm n},\\
&G_2(w)
  =
  -R(\bm\theta, w)
   -\ang{ \dVth \cdot \bm n - \tau\PM \dWpsi, \PM w},\\
&G_3(\mu)
 =-\ang{\dVth \cdot \bm n - \tau \PM\dWpsi, \mu}.
\end{align*}
Substituting $\bm v = \Eq$ in \eqref{acons-a},
 $w = \Eu$ in \eqref{acons-b} and $\mu = \Et$ in \eqref{acons-c} yields
\begin{subequations}\label{cons-th}
\begin{align}
  &(\bm \theta+\nabla \PW \psi, \Eq)_{\T_h}
  -\ang{\PM (\psi-\PW\psi), \Eq \cdot \bm n}
  = G_1(\Eq),
  \label{cons-th-a}
\\
 &-(\bm \theta, \nabla \Eu)_{\T_h}
  +\ang{\wh{\PV\bm \theta} \cdot \bm n , \PM\Eu - \Et}
  = \|\Eu\|^2+G_2(\Eu) - G_3(\Et).
  \label{cons-th-b}
\end{align}
\end{subequations}
Taking $\bm v = \PV\bm\theta$,
 $w  = \PW \psi$ and $\mu = \PM \psi$ in
the error equations \eqref{errequs}, we have
\begin{subequations} \label{errequs-theta}
\begin{align}
	(\Eq+\nabla \Eu, \bm\theta)_{\T_h}
	 -\ang{\PM \Eu - \Et, \PV\bm\theta \cdot \bm n}
	 &= F_1(\PV\bm\theta),
   \label{errequs-theta-a} \\
	-(\Eq, \nabla \PW \psi)
	+ \ang{\PM(\wh\Eq \cdot \bm n), \PW \psi - \psi}
	  &= F_2(\PW \psi) - F_3(\PM\psi).
    \label{errequs-theta-b}
\end{align}
\end{subequations}
Note that $\PM\psi \in M_h$ since $\psi$ is single-valued on $\E_h$.
Adding \eqref{errequs-theta-b} to \eqref{cons-th-a}  and
\eqref{errequs-theta-a} to \eqref{cons-th-b}, we have
\begin{align*}
(\bm \theta, \Eq)_{\T_h}
- \ang{\tau(\PM \Eu - \Et), \dWpsi}
 &= G_1(\Eq) + F_2(\PW\psi) - F_3(\PM\psi), \\
 (\Eq,\bm \theta)_{\T_h}
 - \ang{\tau(\PM \Eu - \Et), \dWpsi}
 &= \|\Eu\|^2+F_1(\PV\bm\theta) + G_2(\Eu) - G_3(\Et),
\end{align*}
respectively. Since  the left-hand sides are equal to each other,
we obtain
\begin{align*}
 \|\Eu\|^2
 = G_1(\Eq) - G_2(\Eu) + G_3(\Et) -
   (F_1(\PV\bm\theta) - F_2(\PW \psi) + F_3(\PM \psi)).
\end{align*}
By the inverse and trace inequalities, we have
\begin{align*}
 |G_1(\Eq)|
  &\le Ch|\psi|_{2}\|\Eq\|.
\end{align*}
By Lemma \ref{lem:Eq-GradEu} and Theorem \ref{thm1},
we get
\begin{align*}
|G_2(\Eu) - G_3(\Et)|
&\le |R(\bm\theta, \Eu)|
+  |\ang{-\dVth \cdot \bm n
   + \tau \dWpsi, \PM \Eu - \Et}|\\
&\le Ch|\bm\theta|_1 \|\nabla \Eu\|_{\T_h}
    + Ch(|\bm\theta|_1 + |\psi|_2) \cdot\tau^{1/2}\|\PM \Eu - \Et\|_{\rd\T_h} \\
&= Ch\|\Eu\|
( \|\Eq\|+  \tau^{1/2}\|\PM \Eu - \Et\| + h^{k+1}|u|_{k+2}) \\
&\le Ch^{k+2}|u|_{k+2}\|\Eu\|,
\end{align*}
Similarly, we have
\begin{align*}
F_1(\PV\bm\theta)
 &=
  -(\nabla\dWu, \bm\theta)
  +\ang{\dWu, \PM(\PV\bm\theta\cdot\bm n)}\\
&=  (\dWu, \nabla \cdot \bm\theta)
  -\ang{(\Id-\PM)\dWu, \PV\bm\theta\cdot\bm n}\\
&=: T_1+T_2,
\end{align*}
and the terms are bounded as
\begin{align*}
  |T_1| &\le \|\dWu\|\|\Eu\|\le Ch^{k+2}|u|_{k+2}\|\Eu\|,\\
  |T_2| &=
  |\ang{(\Id-\PM)\dWu, (\PV\bm\theta - \bm \theta
   + \bm\theta - \Pik\bm\theta)\cdot\bm n}|\\
  &\le \|\dWu\|_{\rd\T_h}
     (\|\dVth\|_{\rd\T_h}
      +\|\bm \theta - \Pik\bm\theta\|_{\rd\T_h}) \\
  &\le Ch^{k+3/2}|u|_{k+2} \cdot Ch^{1/2}|\bm\theta|_1 \\
  &\le Ch^{k+2}|u|_{k+2} |\bm\theta|_1.
\end{align*}
Moreover,
\begin{align*}
F_2(\PW \psi) - F_3(\PM\psi)
&=-R(\bm q, \PW\psi)
    -\ang{\dVq \cdot \bm n - \tau\PM\dWu, \PM \dWpsi}\\
&=: T_3 + T_4 .
\end{align*}
Since both $\bm q$ and $\psi$ are single-valued on $\E_h$, it follows that
\[
 T_3 = -\ang{(\Id-\PM) \bm q \cdot \bm n, \dWpsi}
 = R(\bm q, \PW \psi - \psi).
\]
By Lemma \ref{lem:pik}, we get
\begin{align*}
 |T_3|
 &\le C h^{k+1}|u|_{k+2} |\dWpsi|_{1, \T_h}  \le Ch^{k+2}|u|_{k+2} |\psi|_2.
\end{align*}
The other term is bounded as follows:
\begin{align*}
|T_4| &\le
 C\left(
  \|\dVq\|_{\rd\T_h} + \tau \|\dWu\|_{\rd\T_h}
 \right) \|\dWpsi\|_{\rd\T_h}\\
 &\le C(h^{k+1/2}|\bm q|_{k+1} + h^{-1}h^{k+3/2}|u|_{k+2}) \cdot Ch^{3/2}|\psi|_2 \\
 &\le Ch^{k+2}|u|_{k+2}|\psi|_2.
\end{align*}
Combining these results and applying Young's inequality, we have
\begin{align*}
&\|\Eu\|^2 \le
Ch^{k+2}\left(\|\Eu\|+|\bm\theta|_1+|\psi|_2 \right).
\end{align*}
Thanks to the elliptic regularity, we obtain the required inequality.
\qed
\end{proof}

\section{Numerical results}
In this section, we carry out numerical experiments
 to verify  our theoretical results.
 The following test problem is considered:
\begin{subequations} \label{testprob}
\begin{align}
   -\Delta u &= 2\pi^2 \sin(\pi x) \sin(\pi y) &&\text{ in } \Omega, \\
           u &= 0 && \text{ on } \rd\Omega,
\end{align}
\end{subequations}
where $\Omega =  (0,1)^2$ and the exact solution is $\sin(\pi x)\sin(\pi y)$.
 All computations were done with
  \texttt{FreeFem++}~\cite{FreeFem}.
The meshes we used are unstructured triangular meshes.
We set $\bm V_h = \bm P_{k+l}(\T_h)$, $W_h = P_{k+1}(\T_h)$
and $M_h = P_k(\E_h)$ for $0 \le k \le 2$,
varying $l$ from $0$ to $2$.
The stabilization parameter $\tau$ is set to be $1/h$ in all cases.

 The history of convergence of our method is displayed in Tables \ref{tbl:p0}--\ref{tbl:p2}.
From the results, we observe that
the orders or convergence in $\bm q$, $u$ and the projected jump
 quantity
 are  $k+1$, $k+2$ and $k+1$, respectively, which is in full agreement with
 Theorems \ref{thm1} and \ref{thm2}.
Note that, as mentioned in Remark 1, the errors of our method in Table \ref{tbl:p1} for $l=0$  coincide
with those of the HDG-LS method  in Table \ref{tbl1}.
\begin{table}[h]
  \caption{Convergence history for $k=0$}
  \label{tbl:p0}
\centering
  \begin{tabular}{|c|c|cc|cc|cc|} \hline
&  & \multicolumn{2}{c|}{$\|\bm q - \bm q_h\|$} &
   \multicolumn{2}{c|}{$\|u-u_h\|$} &
   \multicolumn{2}{c|}{$\|h^{-1/2}(\PM u_h - \wh u_h)\|_{\rd \T_h}$}   \\
$l$ & $1/h$  & Error & Order & Error & Order & Error & Order  \\
   \hline
  & 10 & 2.643E-01 & -- & 1.842E-02 & -- & 3.873E-01 & -- \\
0 & 20 & 1.306E-01 & 1.02 & 4.543E-03 & 2.02 & 1.920E-01 & 1.01 \\
  & 40 & 6.612E-02 & 0.98 & 1.176E-03 & 1.95 & 9.775E-02 & 0.97 \\
  & 80 & 3.313E-02 & 1.00 & 2.928E-04 & 2.01 & 4.853E-02 & 1.01 \\ \hline
  & 10 & 2.152E-01 & -- & 6.793E-03 & -- & 5.545E-02 & -- \\
1 & 20 & 1.069E-01 & 1.01 & 1.686E-03 & 2.01 & 2.673E-02 & 1.05 \\
  & 40 & 5.407E-02 & 0.98 & 4.339E-04 & 1.96 & 1.366E-02 & 0.97 \\
  & 80 & 2.726E-02 & 0.99 & 1.113E-04 & 1.96 & 6.748E-03 & 1.02 \\ \hline
  & 10 & 2.533E-01 & -- & 6.078E-03 & -- & 1.502E-02 & -- \\
2 & 20 & 1.254E-01 & 1.01 & 1.498E-03 & 2.02 & 7.143E-03 & 1.07 \\
  & 40 & 6.348E-02 & 0.98 & 3.847E-04 & 1.96 & 3.654E-03 & 0.97 \\
  & 80 & 3.184E-02 & 1.00 & 9.910E-05 & 1.96 & 1.802E-03 & 1.02 \\ \hline
\end{tabular}
\end{table}

\begin{table}[h]
  \caption{Convergence history for $k=1$}
     \label{tbl:p1}
\centering
\begin{tabular}{|c|c|cc|cc|cc|} \hline
 &  & \multicolumn{2}{c|}{$\|\bm q - \bm q_h\|$} &
 \multicolumn{2}{c|}{$\|u-u_h\|$} &
 \multicolumn{2}{c|}{$\|h^{-1/2}(\PM u_h - \wh u_h)\|_{\rd \T_h}$}   \\
$l$ & $1/h$  & Error & Order & Error & Order & Error & Order  \\
 \hline
  &  10 & 1.236E-02 & -- & 7.400E-04 & -- & 2.719E-02 & -- \\
0 &  20 & 3.083E-03 & 2.00 & 9.085E-05 & 3.03 & 6.676E-03 & 2.03 \\
  &  40 & 7.655E-04 & 2.01 & 1.140E-05 & 2.99 & 1.662E-03 & 2.01 \\
  &  80 & 1.915E-04 & 2.00 & 1.414E-06 & 3.01 & 4.113E-04 & 2.01 \\ \hline
  &  10 & 7.212E-03 & -- & 1.023E-04 & -- & 3.157E-03 & -- \\
1 &  20 & 1.744E-03 & 2.05 & 1.194E-05 & 3.10 & 7.704E-04 & 2.03 \\
  &  40 & 4.468E-04 & 1.96 & 1.544E-06 & 2.95 & 1.863E-04 & 2.05 \\
  &  80 & 1.127E-04 & 1.99 & 1.922E-07 & 3.01 & 4.525E-05 & 2.04 \\ \hline
 & 10 & 8.936E-03 & -- & 1.059E-04 & -- & 1.763E-03 & -- \\
2 & 20 & 2.187E-03 & 2.03 & 1.254E-05 & 3.08 & 4.422E-04 & 2.00 \\
 & 40 & 5.529E-04 & 1.98 & 1.616E-06 & 2.96 & 1.062E-04 & 2.06 \\
 & 80 & 1.387E-04 & 1.99 & 2.019E-07 & 3.00 & 2.594E-05 & 2.03 \\
\hline
\end{tabular}
\end{table}
\begin{table}[h]
  \caption{Convergence history for $k=2$}
\label{tbl:p2}
\centering
\begin{tabular}{|c|c|cc|cc|cc|} \hline
& & \multicolumn{2}{c|}{$\|\bm q - \bm q_h\|$} &
\multicolumn{2}{c|}{$\|u-u_h\|$} &
\multicolumn{2}{c|}{$\|h^{-1/2}(\PM u_h - \wh u_h)\|_{\rd \T_h}$}   \\
$l$ &$1/h$  & Error & Order & Error & Order & Error & Order  \\
\hline
 & 10 & 2.365E-04 & -- & 2.705E-05 & -- & 1.270E-03 & -- \\
0 & 20 & 2.718E-05 & 3.12 & 1.573E-06 & 4.10 & 1.480E-04 & 3.10 \\
 & 40 & 3.509E-06 & 2.95 & 1.049E-07 & 3.91 & 1.919E-05 & 2.95 \\
 & 80 & 4.350E-07 & 3.01 & 6.381E-09 & 4.04 & 2.346E-06 & 3.03 \\ \hline
  & 10 & 1.172E-04 & -- & 3.673E-06 & -- & 1.291E-04 & -- \\
1  & 20 & 1.338E-05 & 3.13 & 1.973E-07 & 4.22 & 1.449E-05 & 3.15 \\
  & 40 & 1.791E-06 & 2.90 & 1.278E-08 & 3.95 & 1.832E-06 & 2.98 \\
  & 80 & 2.199E-07 & 3.03 & 7.694E-10 & 4.05 & 2.194E-07 & 3.06 \\ \hline
  & 10 & 1.775E-04 & -- & 4.812E-06 & -- & 8.311E-05 & -- \\
2 & 20 & 2.018E-05 & 3.14 & 2.584E-07 & 4.22 & 9.561E-06 & 3.12 \\
  &40 & 2.638E-06 & 2.94 & 1.671E-08 & 3.95 & 1.204E-06 & 2.99 \\
  &80 & 3.146E-07 & 3.07 & 1.002E-09 & 4.06 & 1.445E-07 & 3.06 \\
\hline
\end{tabular}
\end{table}


\bibliographystyle{spmpsci}      
\bibliography{ref}
\end{document}